\DeclareMathOperator{\enm}{End}
\DeclareMathOperator{\ext}{Ext}
\DeclareMathOperator{\id}{id}
\DeclareMathOperator{\matr}{M}
\newcommand{\grmcat}[1]
\newcommand{\defm}[1]{\mathsf{Def}_{#1}}
\newtheorem{proposition}{Proposition}
\newtheorem{theorem}{Theorem}
\newtheorem{lemma}{Lemma}
\newtheorem{corollary}{Corollary}
\newtheorem{definition}{Definition}
\begin{document}
\author{Arvid Siqveland}
\title{Associative Local Function Rings}

\maketitle

\begin{abstract} 
We prove that for an arbitrary field  $k,$ a complete, associative $k^r$-algebra $\hat H$ augmented over $k^r$ has exactly $r$ maximal two-sided ideals and deserves the name $r$-pointed. If $A$ is any $k$-algebra, $M=\{M_i\}_{i=1}^r$ is a family of simple right $A$-modules with a countable $k$-basis, and there is a homomorphism
$\rho_A:A\rightarrow\enm_{\hat H}(H\hat{\otimes}_{k^r}(\oplus_{i=1}^r M_i))=:\hat O(M)$ then $\hat O(M)$ is $r$-pointed and $M$ is contained in the set of right simple  $\hat O(M)$-modules. Our main result is that the subalgebra generated $\rho_A(A)$ and all $\rho_A(a)^{-1}$ whenever $\rho_A(a)$ is a unit, is a natural substitute for the localization $A(M)$ of the $k$-algebra $A$ in $M$ which only exists when the Ore condition is fulfilled.
\end{abstract}

\section{Preliminaries}
We let $k$ denote an arbitrary field. All $k$-algebras are associative with unit and $k$ in  the center. All modules will be right modules at default, and for an $A$-module $M,$ $\eta^A_M:A\rightarrow\enm_k(M)$ denotes the structure morphism, that is $\eta^A_M(a)(m)=ma.$ 
We will only consider modules over $k$-algebras, and the modules are supposed to have a countable basis.
When we use the word \emph{ideal}, we mean two-sided ideal.

The following is well known, and can be found in e.g. \cite{Lam01}:

\begin{lemma}\label{localeqlemma} The following is equivalent for a unital ring $A$:

(i) $A$ has a unique maximal left ideal.

(ii) $A$ has a unique maximal right ideal.

(iii) For every $x\in A,$ either $x$ or $1-x$ is a (two sided) unit.
\end{lemma}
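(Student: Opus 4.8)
The plan is to prove $(i)\Leftrightarrow(iii)$ in full and then invoke the left--right symmetry of condition $(iii)$: since $(iii)$ is unchanged under interchanging ``left'' and ``right,'' the verbatim dual of the $(i)\Leftrightarrow(iii)$ argument gives $(ii)\Leftrightarrow(iii)$, and the three conditions are thereby equivalent. The organizing idea is to track the set $N_\ell$ of elements of $A$ possessing no left inverse. Using the elementary observation that $x$ has a left inverse iff $Ax=A$, one sees that $N_\ell$ is exactly the union of all maximal left ideals; hence $(i)$ says precisely that $N_\ell$ is a single maximal left ideal $\mathfrak m$. (Throughout I assume $A\neq\{0\}$, the zero ring being a degenerate exception.)

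For $(i)\Rightarrow(iii)$ I would first record the closure fact that, because $\mathfrak m=N_\ell$ is an additive subgroup avoiding $1$, any $x\in\mathfrak m$ forces $1-x$ to be left invertible (otherwise $x,\,1-x\in\mathfrak m$ and so $1\in\mathfrak m$). The crux is then to upgrade one-sided invertibility to two-sided: given $vu=1$, the element $e=uv$ is idempotent and satisfies $(1-e)u=0$, so if $1-e$ had a left inverse then $u=0$, impossible; therefore $1-e\in\mathfrak m$, whence $e=1-(1-e)$ is left invertible, and a left-invertible idempotent equals $1$. Thus $uv=1$ and every left-invertible element is a two-sided unit. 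Now $(iii)$ drops out: if $x\notin\mathfrak m$ then $x$ is left invertible, hence a unit; if $x\in\mathfrak m$ then $1-x$ is, by the above, a unit.

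For $(iii)\Rightarrow(i)$ I would show that the set $N$ of non-units is a two-sided ideal, which---since any left ideal containing a unit is all of $A$---must then be the unique maximal left ideal. The same idempotent argument as above, run under hypothesis $(iii)$ (splitting $e=uv$ via ``$e$ or $1-e$ is a unit''), shows that under $(iii)$ being a unit, being left invertible, and being right invertible all coincide. Closure of $N$ under left and right multiplication is immediate from this: if $ra$ were a unit it would be left invertible, forcing $a$ to be left invertible and hence a unit. For closure under addition, suppose $a,b\in N$ but $a+b=u$ is a unit; then $u^{-1}a$ and $u^{-1}b$ lie in $N$ and sum to $1$, so with $x=u^{-1}a$ we would have both $x$ and $1-x$ non-units, contradicting $(iii)$.

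The main obstacle---and the only genuinely non-formal step---is the passage from a one-sided inverse to a two-sided one; everything else is bookkeeping about ideals and units. The idempotent $e=uv$ together with the dichotomy supplied by the governing hypothesis (``$1-e\in\mathfrak m$'' in the first direction, ``$e$ or $1-e$ is a unit'' in the second) is exactly what resolves it, and it is worth isolating this as the key sublemma before assembling the four implications.
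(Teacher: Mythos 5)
Your proof is correct. The paper itself offers no argument for this lemma---it is stated as well known with a pointer to Lam---so there is nothing internal to compare against; what you have written is essentially the standard textbook proof from that reference: identify the non-left-invertible elements with the union of the maximal left ideals, use the idempotent $e=uv$ with $(1-e)u=0$ to promote a one-sided inverse to a two-sided unit, and exploit the left--right symmetry of condition (iii) to obtain (ii) from the (i)$\Leftrightarrow$(iii) argument. All the steps check out, including the closure of the set of non-units under addition via $u^{-1}a+u^{-1}b=1$. The one point worth keeping explicit is your exclusion of the zero ring, where (iii) holds vacuously but (i) and (ii) fail for lack of any maximal ideal; the paper does not mention this, but the standard definition of a local ring excludes it anyway.
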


In the above case, the unique left and right ideals coincide with the Jacobson radical.

\begin{definition} A ring which satisfies one of the equivalent conditions in Lemma \ref{localeqlemma} is called a local ring.
\end{definition}

\begin{lemma}\label{uniquemaxlemma} If $A$ is a local ring, then $A$ has a unique maximal ideal $\mathfrak m.$
\end{lemma}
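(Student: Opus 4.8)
The plan is to take the unique maximal left ideal $\mathfrak m$ furnished by Lemma~\ref{localeqlemma}(i), show that it is in fact two-sided, and then observe that it absorbs every proper two-sided ideal; uniqueness of the maximal ideal is then forced.

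First I would record the elementary fact that every proper left ideal of a unital ring sits inside a maximal left ideal (Zorn's lemma on the poset of proper left ideals: the union of a chain of proper left ideals is again proper, since $1$ lies in none of them). Because $A$ has, by hypothesis, only the one maximal left ideal $\mathfrak m$, this says that \emph{every} proper left ideal is contained in $\mathfrak m$. In particular every proper two-sided ideal, being a fortiori a proper left ideal, lies in $\mathfrak m$. Thus once $\mathfrak m$ is known to be two-sided it is automatically the largest proper two-sided ideal, hence the unique maximal one.

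The crux is therefore to see that $\mathfrak m$ is two-sided, and I would do this by identifying $\mathfrak m$ with the set of non-units. An element of $\mathfrak m$ is certainly not a unit, or else $\mathfrak m = A$; conversely, if $x$ is a non-unit I claim $x \in \mathfrak m$. The key is that in a local ring a one-sided inverse is already a two-sided inverse: if $yx = 1$ then $xy$ is an idempotent, and the only idempotents of $A$ are $0$ and $1$ (given an idempotent $e$, condition (iii) makes $e$ or $1-e$ a unit, and $e(e-1)=0$ then forces $e=1$ or $e=0$); since $xy \neq 0$ we conclude $xy = 1$. Hence a non-unit is not left-invertible, so the principal left ideal it generates is proper and the element lands in $\mathfrak m$ by the previous paragraph. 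This shows $\mathfrak m$ equals the set of non-units, a description symmetric in left and right; in particular $\mathfrak m$ coincides with the unique maximal right ideal of Lemma~\ref{localeqlemma}(ii) and so is closed under both left and right multiplication, i.e.\ is two-sided. (Alternatively one may simply invoke the remark following Lemma~\ref{localeqlemma}, that $\mathfrak m$ is the Jacobson radical, which is two-sided by construction.)

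With $\mathfrak m$ two-sided and proper, the second paragraph gives $I \subseteq \mathfrak m$ for every proper two-sided ideal $I$, whence $\mathfrak m$ is the unique maximal two-sided ideal and the lemma follows. The one point demanding care is the identification of $\mathfrak m$ with the non-units, specifically the idempotent argument, since everything else is formal; I would make sure condition (iii) is used exactly at the step where one-sided invertibility is upgraded to two-sided.
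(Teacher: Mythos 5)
Your proof is correct, and it takes a more self-contained route than the paper does. The paper's own proof is a one-line appeal to the standard fact that the unique maximal right ideal of a local ring coincides with the Jacobson radical, which is two-sided by construction --- essentially the parenthetical alternative you offer at the end. Your main argument instead identifies $\mathfrak m$ directly with the set of non-units, using condition (iii) of Lemma~\ref{localeqlemma} to show that idempotents are trivial and hence that one-sided inverses are two-sided; the left--right symmetry of ``non-unit'' then makes $\mathfrak m$ two-sided without any reference to the Jacobson radical. What your version buys is completeness on a point the paper leaves implicit: knowing that $\mathfrak m$ is a two-sided ideal only exhibits \emph{one} maximal two-sided ideal, and your first paragraph (every proper two-sided ideal is a proper left ideal, hence contained in the unique maximal left ideal $\mathfrak m$) is exactly the step needed to get \emph{uniqueness}. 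The only spot worth tightening is the assertion $xy \neq 0$: it deserves the one-line justification that $xy = 0$ together with $yx = 1$ would give $1 = (yx)(yx) = y(xy)x = 0$, contradicting $A \neq 0$.
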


\begin{proof} This follows because the unique maximal right ideal coincides with the Jacobson radical which is also a two-sided ideal. 
\end{proof}

Notice that the converse to Lemma \ref{uniquemaxlemma} does not hold. It is relatively easy to prove that $\matr_k(2)$ has $(0)$ as the unique maximal ideal: Assume an ideal $\mathfrak a$ contains a  $2\times 2$-matrix $A\neq 0.$ By left multiplication (row reduction) and right multiplication (column reduction) we might assume $A=\left(\begin{matrix}0&1\\0&0\end{matrix}\right).$ Then $\left(\begin{matrix}0&1\\1&0\end{matrix}\right)\left(\begin{matrix}0&1\\0&0\end{matrix}\right)=\left(\begin{matrix}0&0\\0&1\end{matrix}\right)$ is in $\mathfrak a,$ and $\left(\begin{matrix}0&1\\0&0\end{matrix}\right)\left(\begin{matrix}0&1\\1&0\end{matrix}\right)=\left(\begin{matrix}1&0\\0&0\end{matrix}\right)$ is in $\mathfrak a,$ and so their sum $1\in\mathfrak a.$ On the other hand, it is not true that for every matrix $A,$ $A$ or $1-A$ 	is a unit.

\begin{lemma} Let $k$ be a field and $V$ a $k$-vector space. If $V$ is finite-dimensional, $\enm_k(V)$ has no non-trivial ideals. If $V$ has a countable basis, then the set $\mathfrak m\subseteq\enm_k(V)$ consisting of endomorphisms with finite dimensional image, is a unique maximal ideal.
\end{lemma}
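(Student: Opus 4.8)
The statement splits into two claims that share one computational device, so the plan is to dispatch the finite-dimensional case first and then build the infinite-dimensional case on top of the same idea.

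For the finite-dimensional part, note that $\enm_k(V)\cong\matr_k(n)$ with $n=\dim V$, and the assertion is exactly that this ring is simple. The plan is to show that any nonzero two-sided ideal $I$ already contains every rank-one operator; since in finite dimension the rank-one operators span $\enm_k(V)$, this forces $I=\enm_k(V)$. Concretely, for $u\in V$ and $h\in V^{*}$ write $u\otimes h$ for the operator $x\mapsto h(x)u$. Given $0\neq\phi\in I$, choose $v_0$ with $w_0:=\phi(v_0)\neq0$ and $g_0\in V^{*}$ with $g_0(w_0)=1$; then for arbitrary $w\in V$, $f\in V^{*}$ one checks that $w\otimes f=(w\otimes g_0)\circ\phi\circ(v_0\otimes f)$, which lies in $I$ because $I$ is two-sided. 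This is the coordinate-free form of the $2\times2$ computation already carried out in the text, and it finishes the first claim.

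For the second claim I would first verify that $\mathfrak m$ is a proper two-sided ideal. Additivity follows from $\im(\phi+\psi)\subseteq\im\phi+\im\psi$, and the ideal property from $\im(\alpha\phi)=\alpha(\im\phi)$ and $\im(\phi\alpha)\subseteq\im\phi$, so all the relevant images stay finite-dimensional when $\phi,\psi\in\mathfrak m$; properness holds because $\id_V\notin\mathfrak m$ once $\dim V=\aleph_0$. The content of the lemma is then maximality together with uniqueness, and both follow at once from the single claim that \emph{any two-sided ideal containing an operator of infinite rank is all of $\enm_k(V)$}.

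To prove that claim---the crux of the argument---take $\phi$ with $\im\phi$ infinite-dimensional; since $\dim V=\aleph_0$ this forces $\dim\im\phi=\aleph_0$. Write $V=\ker\phi\oplus U$, so that $\phi$ restricts to an isomorphism $U\xrightarrow{\sim}W:=\im\phi$ with $\dim U=\aleph_0=\dim V$. Countability now enters decisively: because $\dim U=\dim V$, there is an isomorphism $\beta$ of $V$ onto $U$, viewed as an endomorphism $\beta:V\to V$, and then $\phi\beta:V\xrightarrow{\sim}W$. Choosing a complement $V=W\oplus W'$ and letting $\alpha$ be $(\phi\beta)^{-1}$ on $W$ and $0$ on $W'$ gives $\alpha\phi\beta=\id_V$. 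Hence any two-sided ideal containing such a $\phi$ contains $\id_V$ and is therefore the whole ring. Consequently every proper ideal consists only of finite-rank operators, i.e.\ is contained in $\mathfrak m$; since $\mathfrak m$ is itself proper, it is the unique maximal ideal. The only genuine obstacle is this last construction of $\alpha$ and $\beta$, and everything there rests on the hypothesis $\dim V=\aleph_0$, which makes $U$ and $V$ isomorphic; the remaining steps are routine bookkeeping.
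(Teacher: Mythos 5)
Your proof is correct, but note that the paper does not actually argue this lemma at all: its entire ``proof'' is the citation ``Lam, exercise 36, p.~50,'' so you have supplied a self-contained argument where the text defers to a reference. Both halves of your write-up check out. The identity $w\otimes f=(w\otimes g_0)\circ\phi\circ(v_0\otimes f)$ does evaluate to $x\mapsto f(x)\,g_0(w_0)\,w=f(x)w$, so any nonzero two-sided ideal of $\enm_k(V)$ in finite dimension contains all rank-one operators and hence everything; this is indeed the coordinate-free version of the $2\times 2$ row/column-reduction computation the paper performs for $\matr_k(2)$ just after Lemma~2. For the countable case, the crux --- that an ideal containing an operator of infinite rank contains $\id_V$ --- is handled correctly: splitting $V=\ker\phi\oplus U$, using $\dim U=\aleph_0=\dim V$ to get $\beta:V\xrightarrow{\sim}U$, and inverting $\phi\beta$ on $W=\im(\phi\beta)$ while killing a complement yields $\alpha\phi\beta=\id_V$. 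This is exactly where the countability hypothesis is used (for uncountable $\dim V$ there are further ideals, one for each infinite cardinal below $\dim V$, so the statement would fail as phrased), and your argument makes that dependence explicit. The one point worth stating a bit more carefully is the final deduction: having shown every proper ideal lies in $\mathfrak m$, you should note that $\mathfrak m$ itself is proper (which you do, via $\id_V\notin\mathfrak m$), whence $\mathfrak m$ is maximal and is the only maximal ideal. Nothing is missing; your proof could replace the citation verbatim.
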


\begin{proof} Lam, exercise 36, p. 50 \cite{Lam01}. 
\end{proof}

\begin{definition} A left (right) module $M$ over an associative ring $A$ is simple if it has no nonzero left (right) proper submodules. 
\end{definition}

\begin{lemma}\label{forwardlemma} Let $f:A\rightarrow B$ be a homomorphism of two associative algebras.  If $M$ is a  right $B$-module which is simple as right $A$-module, then $M$ is simple as $B$-module. 
\end{lemma}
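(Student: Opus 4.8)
The plan is to exploit the fact that restriction of scalars along $f$ can only shrink the module structure, so that the lattice of $B$-submodules of $M$ sits inside the lattice of its $A$-submodules. First I would make the $A$-module structure on $M$ explicit: via $f$ the structure morphism factors as $\eta^A_M = \eta^B_M \circ f$, so that $m \cdot a = m \cdot f(a)$ for all $a \in A$ and $m \in M$. This is the module structure with respect to which $M$ is assumed to be simple.

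The key step is the observation that every $B$-submodule $N \subseteq M$ is automatically an $A$-submodule. Indeed, $N$ is a $k$-subspace closed under the action of every element of $B$, and in particular under the action of $f(a)$ for each $a \in A$; since $n \cdot a = n \cdot f(a)$, closure under the $B$-action forces closure under the $A$-action. Hence the collection of $B$-submodules of $M$ is contained in the collection of its $A$-submodules.

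With this inclusion in hand the conclusion is immediate. Because $M$ is simple as an $A$-module, it is nonzero and its only $A$-submodules are $0$ and $M$. By the previous step any $B$-submodule must be one of these two, so $M$ has no nonzero proper $B$-submodule and is itself nonzero, i.e. $M$ is simple as a $B$-module.

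I do not expect a genuine obstacle here: the whole content is the monotonicity of submodule lattices under restriction of scalars. The only points requiring care are the bookkeeping of the two module structures on the same underlying space and the remark that simplicity carries the requirement $M \neq 0$, which transfers verbatim from $A$ to $B$.
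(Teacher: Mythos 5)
Your proof is correct and is essentially the paper's own argument: every $B$-submodule of $M$ is an $A$-submodule via $\eta^A_M=\eta^B_M\circ f$, hence must be $0$ or $M$. You simply spell out the bookkeeping that the paper leaves implicit.
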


\begin{proof} Every $B$-submodule $N\subseteq M$ is also an $A$-submodule, thus either $0$ or $M.$ 
\end{proof}

If $A$ is a local ring it has a unique maximal two-sided ideal, but the converse is not true. It is well known that for any ring $R,$ the ring of $n\times n$-matrices $\matr_n(R)$ are Morita equivalent to $R$ so that for a field $k,$ $\matr_n(k)$ has exactly one simple right module. Thus $\matr_n(k)$ has a unique maximal ideal, but it is not local. The unique simple right module can be defined by several non-isomorphic ideals.

\begin{definition} An associative ring $R$ with exactly $r$ maximal ideals, is called an $r$-pointed ring. When $r=1$ we call it a pointed ring.
\end{definition}

\section{Augmented $k$-algebras}
Let $k$ be a field and $r\in\mathbb N_+.$ Let $\{Q_i\}_{i=1}^r$ be a family of pointed $k$-algebras containing $k.$ A $k$-algebra $H$ is augmented over $\oplus_{i=1}^r Q_i$ if it  fits in the diagram 
$$\xymatrix{k^r\ar[r]^\rho\ar[dr]_{\iota}&H\ar@{>>}[d]^{\pi_H}\ar[dr]^{\tilde{\pi}_H}&\\&\oplus_{i=1}^r Q_i\ar@{>>}[r]&\oplus_{i=1}^r Q_i/\mathfrak m_i}$$ where $\mathfrak m_i\subset Q_i$ is the unique maximal ideal.

Let $\mathfrak m=\ker\tilde\pi_H$ which is a two-sided ideal, not necessarily maximal because $k^r\subseteq H/\mathfrak m$ is not a division ring for $r\geq 2.$

\begin{definition} The $\mathfrak m$-adic completion of $H$ is defined as $$\hat H=\underset{\underset{n\geq 0}\leftarrow}\lim H/\mathfrak m^n.$$ Then $\hat H$ fits in the diagram $$\xymatrix{k^r\ar[r]^\rho\ar[dr]_{\id}&\hat H\ar@{>>}[d]^\pi\\&\oplus_{i=1}^r Q_i,}$$ and by definition $\hat{\hat H}\simeq\hat H.$ Any $k$-algebra $S$ augmented over some $\oplus_{i=1}^r Q_i$ with the property that $\hat S\simeq S$ is called complete.
\end{definition}

\begin{lemma}\label{unitlemma}
Let $\hat H$ be a complete algebra augmented over $k^r.$ Let $\pi_i$ be the composition of the augmentation with the $i$'th projection, and let $\mathfrak m_i=\ker\pi_i.$ If $x\notin\cup_{i=1}^r\mathfrak m_i,$ then $x$ is a unit.
\end{lemma}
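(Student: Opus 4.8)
The plan is to reformulate the hypothesis as a statement about the augmentation $\pi:\hat H\to k^r$ and then lift units from $k^r$ back to $\hat H$ using completeness. First I would observe that, writing $u=\pi(x)\in k^r$, the condition $x\notin\cup_{i=1}^r\mathfrak m_i$ says precisely that every component $\pi_i(x)$ is nonzero, i.e. that $u$ is a unit in the product of fields $k^r$. Let $u^{-1}\in k^r$ denote its inverse and set $J=\ker\pi$, so that $\hat H/J\cong k^r$ and $\hat H$ is complete with respect to $J$ by construction of the $\mathfrak m$-adic completion.

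The key step is the standard geometric-series lemma: for $y\in J$, the element $1-y$ is a two-sided unit in $\hat H$. I would prove this by noting that the partial sums $s_N=\sum_{n=0}^N y^n$ form a Cauchy sequence in the $J$-adic topology, since $s_N-s_M\in J^{M+1}$ for $N>M$; completeness of $\hat H=\varprojlim\hat H/J^n$ then supplies a limit $s\in\hat H$, and passing to the limit in $(1-y)s_N=1-y^{N+1}=s_N(1-y)$ gives $(1-y)s=1=s(1-y)$ because $y^{N+1}\to 0$.

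With this in hand the conclusion follows by bookkeeping with the section $\rho$. Applying $\pi$ to $\rho(u^{-1})x$ yields $u^{-1}u=1$, so $\rho(u^{-1})x=1-y$ with $y\in J$; by the lemma $1-y$ has an inverse $s$, and then $s\,\rho(u^{-1})$ is a left inverse of $x$. Symmetrically, $x\,\rho(u^{-1})=1-y'$ with $y'\in J$ is a unit with inverse $s'$, so that $\rho(u^{-1})\,s'$ is a right inverse of $x$. An element possessing both a left and a right inverse is a two-sided unit, the two inverses necessarily coinciding, so $x$ is a unit.

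I expect the main obstacle to be the convergence step: one must check that the powers $J^n$ genuinely form a neighbourhood basis of $0$ and that $\hat H$, as the inverse limit, is complete, so that $\sum_{n\ge 0}y^n$ converges there. This is exactly where the completeness hypothesis on $\hat H$ is doing the work; everything else is formal manipulation of the section $\rho$ and the elementary fact about left and right inverses.
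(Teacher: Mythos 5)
Your proof is correct and follows essentially the same route as the paper: identify $\pi(x)$ as a unit $\alpha$ of $k^r$, reduce via the section $\rho$ to inverting an element of the form $1-y$ with $y\in\ker\pi$, and sum the geometric series using $\mathfrak m$-adic completeness, finishing with the standard left-inverse/right-inverse argument. The only difference is cosmetic (you multiply $x$ by $\rho(\alpha^{-1})$ rather than factoring $x=\rho(\alpha)(1-m_1)$), and your explicit verification that the partial sums converge in $\varprojlim \hat H/\mathfrak m^n$ is a point the paper leaves implicit.
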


\begin{proof} We have the diagram \begin{equation}\label{rlocdiag}\xymatrix{k^r\ar[r]^\rho\ar[dr]_\id&\hat H\ar@{>>}[d]^\pi\\&k^r}\end{equation}
and $\pi(x)=\pi(\rho(\alpha))$ where $\alpha=(\alpha_1,\dots,\alpha_r)$ with all $\alpha_i\neq 0.$

Then $\rho(\alpha)-x=n\in\mathfrak m$ so that 
$x=\rho(\alpha)-n=\rho(\alpha)(1-\rho(\alpha^{-1})n)=\rho(\alpha)(1-m_1)$ and
$x=\rho(\alpha)-n=(1-n\rho(\alpha^{-1}))\rho(\alpha)=(1-m_2)\rho(\alpha).$

We find that 

$$\begin{aligned} x(\sum_{i=0}^\infty m_1^i)\rho(\alpha^{-1})=\rho(\alpha)(1-m)(\sum_{i=0}^\infty m_1^i)\rho(\alpha^{-1})=\rho(\alpha)\rho(\alpha^{-1})=1,\\
\rho(\alpha^{-1})(\sum_{i=0}^\infty m_2^i)x=\rho(\alpha^{-1})(\sum_{i=0}^\infty m_2^i)(1-m_2)\rho(\alpha)=\rho(\alpha^{-1})\rho(\alpha)=1.
\end{aligned}$$

This proves that $x$ is a unit, and the right and left unit are equal:
$u_1 x=1$ and $x u_2=1$ gives $u_1=u_1(x u_2)=(u_1x)u_2=u_2.$

\end{proof}

\begin{proposition}\label{rpointprop} A complete $k$-algebra $\hat H$ augmented over $k^r$ has exactly $r$ maximal right ideals. These are also left maximal ideals.
\end{proposition}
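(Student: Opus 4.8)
The plan is to exhibit $r$ explicit maximal right ideals, prove they are the only ones by pushing everything else into the Jacobson radical, and then count them off the semisimple quotient $\hat H/\mathfrak m\cong k^r$. First I would fix the $r$ candidates. Each $\pi_i\colon\hat H\to k$ is surjective because $\pi_i\circ\rho=\mathrm{pr}_i$, so $\hat H/\mathfrak m_i\cong k$ is a field. A field has no nonzero proper one-sided ideals, so $\mathfrak m_i=\ker\pi_i$ is simultaneously a maximal right, maximal left, and maximal two-sided ideal. To see that the $\mathfrak m_i$ are pairwise distinct I would use the orthogonal idempotents $e_i=\rho(\epsilon_i)$, where $\epsilon_i$ is the $i$-th coordinate idempotent of $k^r$; since $\pi_j(e_i)=\delta_{ij}$ we have $e_i\in\mathfrak m_j$ exactly when $j\neq i$, so no two of the $\mathfrak m_i$ coincide. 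This already produces $r$ distinct maximal right ideals, all of which are also left maximal.

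The crux is to show that no further maximal right ideals exist, and this is where Lemma \ref{unitlemma} does the work. Write $\mathfrak m=\ker\pi=\bigcap_{i=1}^r\mathfrak m_i$, so that $\hat H/\mathfrak m\cong k^r$. I claim $\mathfrak m$ is contained in the Jacobson radical $\mathrm{rad}(\hat H)$. Indeed, take $m\in\mathfrak m$ and any $a\in\hat H$; then $am\in\mathfrak m$, so $\pi_i(1-am)=1-\pi_i(am)=1\neq 0$ for every $i$, which means $1-am\notin\bigcup_{i=1}^r\mathfrak m_i$. By Lemma \ref{unitlemma}, $1-am$ is then a two-sided unit. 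Since this holds for every $a$, every element of $\mathfrak m$ lies in $\mathrm{rad}(\hat H)$. This is the step I expect to be the main obstacle: not because it is computationally hard once set up, but because it is the precise point where completeness enters (through Lemma \ref{unitlemma}), and one must be careful that a single element $1-am$ avoids the entire union $\bigcup_i\mathfrak m_i$ simultaneously, rather than merely avoiding each $\mathfrak m_i$ separately.

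Finally I would conclude by the correspondence theorem. Since $\mathrm{rad}(\hat H)$ is the intersection of all maximal right ideals, every maximal right ideal contains $\mathrm{rad}(\hat H)\supseteq\mathfrak m$. Hence the maximal right ideals of $\hat H$ are in inclusion-preserving bijection with those of $\hat H/\mathfrak m\cong k^r$. A finite product of fields $k^r$ has exactly $r$ maximal right ideals, namely the coordinate hyperplanes $\{x:x_i=0\}$, and their preimages under $\pi$ are precisely $\mathfrak m_1,\dots,\mathfrak m_r$. Therefore $\hat H$ has exactly these $r$ maximal right ideals, and by the symmetric argument on the left (or simply because each $\mathfrak m_i$ was already seen to be left maximal) they are also the maximal left ideals, which completes the proof.
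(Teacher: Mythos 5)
Your proof is correct, but it is organized differently from the paper's. The paper argues directly on right ideals: given a right ideal $\mathfrak a$ not contained in any $\mathfrak m_i$, it picks $x_i\in\mathfrak a\setminus\mathfrak m_i$ and assembles the single element $x=\sum_i x_ie_i\in\mathfrak a$, which avoids $\bigcup_{i=1}^r\mathfrak m_i$ and is therefore a unit by Lemma \ref{unitlemma}, forcing $\mathfrak a=\hat H$; there the idempotents $e_i$ are the engine of the argument, splicing the coordinates into one element that is nonzero under every augmentation $\pi_j$. You instead route the count through the Jacobson radical: Lemma \ref{unitlemma} gives that $1-am$ is a unit for every $m\in\mathfrak m$ and $a\in\hat H$, hence $\mathfrak m\subseteq\operatorname{rad}(\hat H)$, and the correspondence theorem reduces the count to the $r$ maximal ideals of $\hat H/\mathfrak m\cong k^r$; your idempotents only do the lighter job of showing the $\mathfrak m_i$ are pairwise distinct. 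Both proofs rest entirely on Lemma \ref{unitlemma}, and your handling of the delicate point (that the single element $1-am$ avoids all the $\mathfrak m_i$ simultaneously, since $\pi_i(1-am)=1$ for every $i$) is sound. Your version buys the extra byproduct that $\operatorname{rad}(\hat H)=\mathfrak m$ and that $\hat H$ is semilocal with semisimple quotient $k^r$, at the cost of invoking the quasi-regularity characterization of the radical; the paper's version is more elementary and self-contained, never mentioning the radical at all.
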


\begin{proof} Consider the diagram \ref{rlocdiag}. The ideals $\mathfrak m_i$ are clearly two-sided and maximal because $\hat H/\mathfrak m_i\simeq k.$ By Zorn's lemma, any proper right ideal is contained in a maximal right ideal. For any right ideal $\mathfrak a$ which is not contained in one of the $\mathfrak m_i$ we can choose $x_i\in\mathfrak a$ such that $x_i\notin\mathfrak m_i.$ Then $x=\sum_{i=1}^rx_ie_i\in\mathfrak a$ and $x\notin\cup_{i=1}^r\mathfrak m_i.$ Such an $x$ is a right unit by Lemma \ref{unitlemma} and the result follows.
\end{proof}

\begin{corollary} A complete $k$-algebra $\hat H$ augmented over $k$ is local.
\end{corollary}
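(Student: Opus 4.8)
The plan is to read the corollary as the case $r=1$ of Proposition \ref{rpointprop}. Augmenting over $k$ is precisely augmenting over $k^r$ with $r=1$, since $k=k^1$, so the proposition applies verbatim and tells us that $\hat H$ has exactly one maximal right ideal. That is exactly condition (ii) of Lemma \ref{localeqlemma}, namely that $\hat H$ has a unique maximal right ideal. By the definition of a local ring immediately following that lemma, $\hat H$ is then local, which completes the argument. This is the route I would take as the official proof, since the genuine work (Zorn's lemma together with the unit argument) has already been discharged inside Proposition \ref{rpointprop}.

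If one prefers a self-contained derivation that does not route through the full $r$-pointed proposition, I would instead verify condition (iii) of Lemma \ref{localeqlemma} directly using Lemma \ref{unitlemma}. With $r=1$ there is a single augmentation ideal $\mathfrak m_1=\ker\pi_1$, and the hypothesis $x\notin\cup_{i=1}^r\mathfrak m_i$ of Lemma \ref{unitlemma} collapses to the single condition $x\notin\mathfrak m_1$. Given an arbitrary $x\in\hat H$ I would split into two cases. If $x\notin\mathfrak m_1$, then $x$ is a unit by Lemma \ref{unitlemma}. If instead $x\in\mathfrak m_1$, then applying the augmentation $\pi$ gives $\pi(1-x)=1\neq 0$, so $1-x\notin\mathfrak m_1$ and hence $1-x$ is a unit, again by Lemma \ref{unitlemma}. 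In either case one of $x$, $1-x$ is a unit, which is exactly condition (iii), so $\hat H$ is local.

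I do not expect any real obstacle here: the corollary is a direct specialization, and both routes above are essentially immediate. The only points demanding a moment of care are bookkeeping ones, namely that \emph{exactly $r$ maximal right ideals} with $r=1$ coincides with having a \emph{unique} maximal right ideal, and that the union $\cup_{i=1}^r\mathfrak m_i$ appearing in Lemma \ref{unitlemma} reduces to the single ideal $\mathfrak m_1$ in the pointed case. Accordingly I would keep the written proof to a single sentence citing Proposition \ref{rpointprop} and Lemma \ref{localeqlemma}.
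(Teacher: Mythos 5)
Your proposal is correct, and your second route (verifying condition (iii) of Lemma \ref{localeqlemma} directly via Lemma \ref{unitlemma} with $r=1$) is precisely the paper's own one-line proof, which reads ``Follows from Lemma \ref{unitlemma} with $r=1$.'' Your first route through Proposition \ref{rpointprop} is an equally valid and essentially equivalent specialization, so there is nothing to add.
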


\begin{proof} Follows from Lemma \ref{unitlemma} with $r=1.$
\end{proof}

Let $V_{ij},\ 1\leq i,j\leq r$ be $k$-vector spaces. Then the $r\times r$-matrix $V=(V_{ij})$ is a $k^r$-algebra, and we can consider the tensor algebra $F(V)=\oplus_{i\geq 0}V^{{\otimes_{k^r}}^i}.$ If we choose  bases $\{t_{ij}(l)\}_{l=1}^{l_{ij}}\subset V_{ij},$ we will write $$F(V)=F((l_{ij})_{r\times r})=\begin{tiny}\left(\begin{matrix}k\langle t_{11}(1),\dots,t_{11}(l_{11})\rangle&\cdots&\langle t_{1r}(1),\dots,t_{1r}(l_{1r})\rangle\\\dots&\langle t_{ii}(1),\dots,t_{ii}(l_{ii})\rangle&\dots\\
\langle t_{r1}(1),\dots,t_{r1}(l_{r1})\rangle&\cdots&k\langle t_{rr}(1),\dots,t_{rr}(l_{rr})\rangle
\end{matrix}\right).\end{tiny}$$
Then $F(V)$ is a $k$-algebra, augmented over $k^r,$ and in the special case $V_{ij}\simeq k,\ 1\leq i,j\leq r,$ we call $T=F(V)/(\ker\pi)^2$ the $r\times r$ tangent algebra.

\section{Completions of Algebras}

Let $\phi:R\rightarrow S$ be a morphism between two $r$-pointed $k^r$-algebras, and let $M$ be a $k^r$-module. Then we have an algebra homomorphism $\Pi(\phi):\enm_{R}(R\otimes_{k^r}M)\rightarrow\enm_S(S\otimes_{k^r}M)$ given by the identity $S\otimes_R(R\otimes_{k^r}M)\simeq S\otimes_{k^r}M.$

\begin{definition}\label{precompdef} Let $A$ be a $k$-algebra and $M=\oplus_{i=1}^rM_i$ a sum of right $A$-modules, $r\geq 1.$ A complete $k$-algebra $\hat H^A_M$ augmented over $k^r$ is a pre-completion of $A$ in $M$ if the following (versal) condition holds: There exists a homomorphism $\hat\rho^A_M:A\rightarrow\enm_{\hat H^A_M}(\hat H^A_M\otimes_{k^r}M)$ fitting in the diagram $$\xymatrix{A\ar[d]_\phi\ar[r]^-{\hat\rho^A_M}\ar[dr]_(0,3){\oplus\eta_{M_i}}&\enm_{\hat H^A_M}(\hat H^A_M\otimes_{k^r}M)\ar[d]^{\Pi(\pi_{\hat H})}\ar@{-->}[dl]_{\Pi(\psi)}\\\enm_S(S\otimes_{k^r}M)\ar[r]_{\Pi(\pi_S)}&\oplus_{i=1}^r\enm_k(M_i)}$$ and such that if $S$ is any other such algebra with a homomorphism $$\phi:A\rightarrow\enm_{S}(S\otimes_{k^r}M)$$ fitting in the diagram, then there exists a morphism $\psi:\hat H^A_M\rightarrow S,$ unique in the case  $S\cong T,$ commuting in the above diagram.
\end{definition}

In the book \cite{ELS17} we proved the following theorem.

\begin{theorem}\label{deftheorem} Let $A$ be a finitely generated associative $k$-algebra. Let $M=\oplus_{i=1}^rM_i$ be a sum of right $A$-modules such that for each $1\leq i,j\leq r$ we have that $\dim_k\ext^1_A(M_i,M_j)<\infty.$ Then there exists a prorepresenting hull for the deformation functor $\defm M.$
\end{theorem}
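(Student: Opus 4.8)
The plan is to present $\defm M$ as a covariant functor $\acat{r}\to\sets$ and to verify the noncommutative analogue of Schlessinger's criteria, from which the existence of a prorepresenting hull in $\cacat{r}$ follows. A deformation of $M$ to an object $R$ of $\acat{r}$ is an algebra homomorphism $A\to\enm_R(R\otimes_{k^r}M)$ which lifts the structure map $\oplus_i\eta_{M_i}$ modulo the radical of $R$ and for which $R\otimes_{k^r}M$ is $R$-flat, two such being identified when they differ by an automorphism of $R\otimes_{k^r}M$ reducing to the identity over $k^r$. This is exactly the situation of Definition \ref{precompdef}, so a prorepresenting object for $\defm M$ is precisely the pre-completion $\hat H^A_M$ we seek.

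First I would record the linear data. Evaluating $\defm M$ on the $r\times r$ tangent algebra $T$ identifies the tangent space of the functor with the matrix $\infdef=(\ext^1_A(M_i,M_j))_{i,j}$, and the hypothesis $\dim_k\ext^1_A(M_i,M_j)<\infty$ supplies Schlessinger's finiteness condition (H3). The gluing conditions (H1) and (H2) --- surjectivity, resp.\ bijectivity in the base case, of the canonical map $\defm M(R'\times_R R'')\to\defm M(R')\times_{\defm M(R)}\defm M(R'')$ for a small extension $R''\to R$ --- follow from the long exact $\ext$-sequence attached to $0\to I\to R'\to R\to 0$ in $\acat{r}$ together with the fact that the set of liftings of a given flat family along such an extension, when non-empty, is a torsor under $\oplus_{i,j}\ext^1_A(M_i,M_j)\otimes_k I$.

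The heart of the argument is the obstruction calculus. For a small extension with kernel $I$ I would show that a deformation over $R$ lifts to $R'$ if and only if an obstruction class in $\oplus_{i,j}\ext^2_A(M_i,M_j)\otimes_k I$ vanishes, and that this class is computed by the matric Massey products of the tangent classes already used. The versal family is then built by induction on the order of nilpotency: starting from the family over $T$ one lifts step by step, at each stage imposing exactly the relations that kill the obstruction. Passing to the limit realizes the hull as the quotient $\hat H^A_M=\hat F((\infdef)^{*})/(o)$ of the completed $r\times r$ tensor algebra on the $k$-dual of the tangent matrix by the ideal generated by the image of the obstruction morphism $o\colon(\obstrdef)^{*}\to\hat F((\infdef)^{*})$, where $\obstrdef=(\ext^2_A(M_i,M_j))_{i,j}$; the required comparison map $\psi$ of Definition \ref{precompdef}, unique over $T$, is then read off from the universal property of this presentation.

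I expect the main obstacle to be making the obstruction morphism well defined and $\mathfrak m$-adically convergent. One must check that the matric Massey products representing the higher obstructions are independent of the choices of defining system modulo the relations already imposed, so that $o$ descends to a genuine morphism on $(\obstrdef)^{*}$, and that the successive lifts assemble to a well-defined element of the $\mathfrak m$-adic completion, so that $\hat H^A_M$ is complete in the sense of the previous section and lies in $\cacat{r}$. Here the finiteness of $\ext^1_A(M_i,M_j)$ is decisive: it keeps each graded piece of $\hat F((\infdef)^{*})$ finite-dimensional and hence the limit an object of $\cacat{r}$, whereas no finiteness of $\ext^2_A(M_i,M_j)$ is needed, since only the image of $o$ enters the construction.
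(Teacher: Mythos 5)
The paper does not prove this theorem in the text; it cites \cite{ELS17}, and your sketch --- verifying the Schlessinger-type conditions for $\defm M$ on $\acat{r}$, identifying the tangent space with the matrix $(\ext^1_A(M_i,M_j))_{i,j}$, and constructing the hull as the quotient of the completed $r\times r$ tensor algebra on $(\infdef)^{*}$ by the image of an obstruction morphism from $(\obstrdef)^{*}$ defined via matric Massey products --- is precisely the argument of that reference. So your proposal is correct and takes essentially the same route as the paper's (cited) proof.
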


From this, it follows that a pre-completion as in Definition \ref{precompdef} exists under the given conditions, and that it is unique up to (non-unique) isomorphism. 

\begin{proposition} Assume that $A$ is a $k$-algebra augmented over $k^r$ and let $M_i=A/\ker(p_i\circ\pi_A)\simeq k,$ where $p_i:k^r\rightarrow k$ is the $i$th projection. Then the $\mathfrak m$-adic completion $\hat A$ of $A$ is a pre-completion of $A$ in $M=\oplus_{i=1}^r M_i.$
\end{proposition}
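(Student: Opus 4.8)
The plan is to reduce the whole statement to the universal property of the $\mathfrak m$-adic completion, after making explicit the identifications that trivialize the tensor products. First I would observe that, as a $k^r$-module, each $M_i\simeq k$ is the $i$th simple: the $k^r$-action factors through $p_i$ because $\pi_A\circ\rho=\id$, so $M=\oplus_{i=1}^r M_i\simeq k^r$. Consequently, for any $r$-pointed $k^r$-algebra $R$ one has a canonical identification $R\otimes_{k^r}M\simeq R\otimes_{k^r}k^r\simeq R$ of left $R$-modules, whence $\enm_R(R\otimes_{k^r}M)\simeq\enm_R(R)\simeq R$ as $k$-algebras (up to the opposite-algebra convention already present in $\eta$, since $\enm_k(M)$ endomorphisms of $R$ as a left module are right multiplications). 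Under these identifications the functor $\Pi$ sends a morphism to itself, and the reduction map $\Pi(\pi_R)$ becomes $\pi_R:R\rightarrow k^r=\oplus_{i=1}^r\enm_k(M_i)$, using $\enm_{k^r}(M)=\oplus_i\enm_k(M_i)$ (there are no off-diagonal maps between distinct simples).

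Next I would construct the structure morphism. Since $\hat A$ is complete and augmented over $k^r$ with $\mathfrak m=\ker\pi_A$, it is a legitimate candidate, and I take $\hat\rho^A_M:A\rightarrow\enm_{\hat A}(\hat A\otimes_{k^r}M)\simeq\hat A$ to be the canonical completion map $A\rightarrow\hat A$. To verify the commuting-triangle requirement of Definition \ref{precompdef}, I would chase the identifications above: $\Pi(\pi_{\hat A})\circ\hat\rho^A_M$ becomes $\pi_{\hat A}$ composed with $A\rightarrow\hat A$, which is the augmentation $\pi_A:A\rightarrow k^r$; and under $k^r=\oplus_i\enm_k(M_i)$ this is exactly $\oplus_i\eta_{M_i}$, because $\eta_{M_i}(a)$ is multiplication by $p_i(\pi_A(a))$ on $M_i\simeq k$.

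The heart of the argument is versality. Let $S$ be any complete $k^r$-algebra with $\phi:A\rightarrow\enm_S(S\otimes_{k^r}M)$ fitting in the diagram. Applying the first step, $\phi$ becomes a $k$-algebra homomorphism $\phi:A\rightarrow S$, and the condition $\Pi(\pi_S)\circ\phi=\oplus_i\eta_{M_i}$ becomes $\pi_S\circ\phi=\pi_A$; that is, $\phi$ is a morphism of $k$-algebras augmented over $k^r$. In particular $\phi(\mathfrak m)\subseteq\mathfrak m_S:=\ker\pi_S$, hence $\phi(\mathfrak m^n)\subseteq\mathfrak m_S^n$ for all $n$, so $\phi$ induces a compatible system $A/\mathfrak m^n\rightarrow S/\mathfrak m_S^n$. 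Passing to the inverse limit and using completeness $\hat S\simeq S$ yields $\psi:\hat A\rightarrow S$ with $\psi\circ(A\rightarrow\hat A)=\phi$, and translating back through the identifications shows that $\Pi(\psi)$ makes the diagram commute.

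Uniqueness then follows from the universal property of the $\mathfrak m$-adic completion: any $\psi$ commuting in the diagram must agree with $\phi$ on the image of $A$, which is dense, and be continuous for the $\mathfrak m$-adic topologies, so it is determined on all of $\hat A$; in particular $\psi$ is unique whenever $S\cong T$. The step I expect to demand the most care is the first one: pinning down the left/right module structures and the opposite-algebra conventions so that the identifications $\enm_R(R\otimes_{k^r}M)\simeq R$ and the translation of the diagram conditions into statements about augmented algebra maps are genuinely correct rather than merely plausible. Once that bookkeeping is secured, the versality assertion is nothing but the universal property of completion.
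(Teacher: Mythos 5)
Your proposal is correct and takes essentially the same route as the paper's own proof, which simply notes that $M\simeq k^r$ forces $\enm_{\hat H^A_M}(\hat H^A_M\otimes_{k^r}M)\simeq\hat H^A_M$ and then appeals to the universal property of the projective limit defining the $\mathfrak m$-adic completion. You have merely filled in the bookkeeping (the reduction of the diagram to augmented-algebra maps and the explicit construction of $\psi$ from $\phi(\mathfrak m^n)\subseteq\mathfrak m_S^n$) that the paper leaves implicit.
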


\begin{proof} In this situation, $M\simeq k^r$ such that $\enm_{\hat H^A_M}(\hat H^A_M\otimes M)\simeq\enm_{\hat H^A_M}(\hat H^A_M)\simeq\hat H^A_M$ and then the result follows by the universal property of projective limits.
\end{proof}

\begin{definition}
Let $A$ be a $k$-algebra and  $M=\oplus_{i=1}^rM_i$ a  sum of $r$ right $A$-modules. If $A$ has a pre-completion $\hat H^A_M$ we define the completion of $A$ in $M$ as $$\hat O^A_M=\enm_{\hat H^A_M}(\hat H^A_M\otimes_{k^r}M).$$
\end{definition}

By definition, the completion of $A$ in $M,$ if it exists, comes with a homomorphism $\rho^A_M:A\rightarrow\hat O^A_M$ commuting in the diagram $$\xymatrix{A\ar[r]^{\rho^A_M}\ar[dr]_{\oplus_{i=1}^r\eta_{M_i}}&\hat O^A_M\ar@{>>}[d]^\pi\\&\oplus_{i=1}^r\enm_k(M_i).}$$ 
Also, $\hat O^A_M$ is augmented over $\oplus_{i=1}^r\enm_k(M_i)$ where each $\enm_k(M_i)$ is a pointed $k$-algebra.

\begin{proposition} The completion of $A$ in $M$ if it exists, is complete.
\end{proposition}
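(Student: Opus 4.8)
The plan is to verify the defining property of completeness directly. Writing $O:=\hat O^A_M=\enm_{\hat H^A_M}(\hat H^A_M\otimes_{k^r}M)$ and $\hat H:=\hat H^A_M$, I would show that the $\mathfrak m_O$-adic completion of $O$ recovers $O$, where $\mathfrak m_O=\ker\tilde\pi_O$ is the radical attached to the augmentation $\pi_O:O\twoheadrightarrow\oplus_{i=1}^r\enm_k(M_i)$ exhibited just before the statement. The only inputs I expect to need are that the pre-completion $\hat H$ is itself complete (by definition) and that $O$ is augmented over the pointed algebras $\enm_k(M_i)$.

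First I would pin down where this augmentation comes from. The map $\pi_O$ is exactly $\Pi(\pi_{\hat H})$ for the augmentation $\pi_{\hat H}:\hat H\to k^r$: applying the functor $\Pi$ to $\pi_{\hat H}$ and using $k^r\otimes_{k^r}M\simeq M$ yields $\enm_{\hat H}(\hat H\otimes_{k^r}M)\to\enm_{k^r}(M)=\oplus_i\enm_k(M_i)$, which is precisely the vertical map $\Pi(\pi_{\hat H})$ appearing in Definition \ref{precompdef}. Consequently $\ker\pi_O$, and hence the $\mathfrak m_O$-adic filtration, is governed by the $\mathfrak m_{\hat H}$-adic filtration on the base, where $\mathfrak m_{\hat H}=\ker\pi_{\hat H}$.

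Next I would exploit completeness of $\hat H$, namely $\hat H\simeq\varprojlim_n \hat H/\mathfrak m_{\hat H}^n$, together with functoriality of $\Pi$ along the tower $\hat H\twoheadrightarrow\hat H/\mathfrak m_{\hat H}^n$, to present $O$ as a projective limit $O\simeq\varprojlim_n\enm_{\hat H/\mathfrak m_{\hat H}^n}\big((\hat H/\mathfrak m_{\hat H}^n)\otimes_{k^r}M\big)$ of finite-level endomorphism algebras. The point is that an $\hat H$-linear endomorphism of $\hat H\otimes_{k^r}M$ is determined by, and can be reassembled from, its reductions modulo the $\mathfrak m_{\hat H}^n$, since the (completed) module $\hat H\,\hat\otimes_{k^r}M$ is the limit of the $(\hat H/\mathfrak m_{\hat H}^n)\otimes_{k^r}M$ and $\enm$ commutes with this limit in the present setting. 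This is a formal diagram-chase once the commutation of $\enm$ and $\otimes_{k^r}$ with the relevant limits is set up.

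The final and hardest step is to identify this projective-limit presentation with the $\mathfrak m_O$-adic completion $\varprojlim_n O/\mathfrak m_O^n$, i.e. to show that the kernels $\ker\big(O\to\enm_{\hat H/\mathfrak m_{\hat H}^n}((\hat H/\mathfrak m_{\hat H}^n)\otimes_{k^r}M)\big)$ are cofinal with the powers $\mathfrak m_O^n$, so that the two systems define the same topology and $\hat O^A_M\simeq O$ follows. I expect this to be the main obstacle: when some $M_i$ is genuinely infinite-dimensional, the radical $\mathfrak m_i\subset\enm_k(M_i)$ of finite-rank endomorphisms is idempotent, so $\mathfrak m_O^n$ does not contract along the endomorphism directions the way a nilpotent ideal would, and one must argue that this idempotent part is absorbed and that only the $\mathfrak m_{\hat H}$-direction controls the completion. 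In the case where every $M_i$ is finite-dimensional the step is immediate, since then $O$ is a matrix algebra over $\hat H$ and $\mathfrak m_O^n$ is literally the $n$-th finite-level kernel.
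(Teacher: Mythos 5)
Your overall strategy is the same as the paper's: the paper's entire proof is the one-line assertion that $\hat O^A_M=\enm_{\hat H^A_M}(\hat H^A_M\otimes_{k^r}M)\simeq\varprojlim_n \enm_{\hat H^A_M}(\hat H^A_M\otimes_{k^r}M)/\mathfrak m^n$, which is exactly the isomorphism you set out to establish. Your first two steps (identifying the augmentation as $\Pi(\pi_{\hat H})$, and presenting $O$ as a limit of the finite-level algebras $\enm_{\hat H/\mathfrak m_{\hat H}^n}((\hat H/\mathfrak m_{\hat H}^n)\otimes_{k^r}M)$ using completeness of $\hat H$) are sound and in fact supply justification that the paper omits entirely.

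The gap is your third step, and you have diagnosed it correctly but not closed it: you must show the finite-level kernels are cofinal with the powers of $\mathfrak m_O=\ker\tilde\pi_O$, and you only prove this when every $M_i$ is finite-dimensional. In the infinite-dimensional case the difficulty you flag is not merely technical -- it is fatal to the naive reading. If some $M_i$ has countably infinite dimension, pick a finite-rank idempotent $p\in\enm_k(M_i)\subseteq O$; then $\tilde\pi_O(p)=0$ since $p$ lands in the ideal of finite-rank endomorphisms, so $p\in\mathfrak m_O$, and $p=p^n\in\mathfrak m_O^n$ for every $n$. Hence $\bigcap_n\mathfrak m_O^n\neq(0)$ and the canonical map $O\to\varprojlim_n O/\mathfrak m_O^n$ is not injective, so the idempotent part is \emph{not} absorbed. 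The statement can only be salvaged by interpreting the completion of $O$ along the filtration induced from $\mathfrak m_{\hat H}\subseteq\hat H$ (i.e.\ reading the quotients in the paper's display as $\enm_{\hat H/\mathfrak m^n}((\hat H/\mathfrak m^n)\otimes_{k^r}M)$, which is your step two), rather than along the $\mathfrak m_O$-adic filtration demanded by the paper's definition of ``complete.'' So the missing step in your proposal is a genuine gap, but it is one the paper's own one-line proof shares and silently elides; your write-up at least makes the obstruction visible.
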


\begin{proof} This is because $$\hat O^A_M=\enm_{\hat H^A_M}(\hat H^A_M\otimes_{k^r}M)\simeq\underset{\underset{n\geq 0}\leftarrow}\lim \enm_{\hat H^A_M}(\hat H^A_M\otimes_{k^r}M)/\mathfrak m^n.$$
\end{proof}

\begin{proposition} The completion $\hat O^A_M$ of $A$ in a sum of right  modules $M=\oplus_{i=1}^rM_i$ has $\tilde M=\{M_i\}_{i=1}^r$ as its set of simple right and left modules.
\end{proposition}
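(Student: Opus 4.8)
The plan is to prove the two inclusions separately. For showing that each $M_i$ is a simple right and left $\hat O^A_M$-module, I would restrict scalars along the $i$-th component $\pi_i:\hat O^A_M\twoheadrightarrow\enm_k(M_i)$ of the augmentation $\pi$. The commuting triangle for $\rho^A_M$ guarantees that the resulting action on $M_i$ is the given one, and since $\pi_i$ is surjective the $\hat O^A_M$-submodules of $M_i$ are exactly its $\enm_k(M_i)$-submodules. As $M_i$ is simple over $\enm_k(M_i)$ on either side (the endomorphism ring acts transitively on nonzero vectors, in the spirit of the preliminary lemma on $\enm_k(V)$), it is simple over $\hat O^A_M$. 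When the $M_i$ are simple over $A$ one may instead invoke Lemma \ref{forwardlemma} applied to $\rho^A_M$ and obtain this at once.

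For the reverse inclusion, let $N$ be a simple right $\hat O^A_M$-module; I want to show $N\cong M_i$ for some $i$. The crucial point is that the augmentation ideal $\mathfrak a=\ker\pi$ lies in the Jacobson radical. This is the completeness argument of Lemma \ref{unitlemma}: an element $x\in\mathfrak a$, viewed as an $\hat H$-endomorphism of $\hat H\otimes_{k^r}M$, sends this module into $\mathfrak m_{\hat H}(\hat H\otimes_{k^r}M)$, so $x^n\to 0$ in the adic topology and $\sum_{n\geq 0}x^n$ is a two-sided inverse of $1-x$; hence $1-ax$ is a unit for every $a$ and $\mathfrak a\subseteq J(\hat O^A_M)$. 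Since $\hat O^A_M/\mathfrak a\cong\oplus_{i=1}^r\enm_k(M_i)$ and each $\enm_k(M_i)$ is semiprimitive (it has the faithful simple module $M_i$), in fact $J(\hat O^A_M)=\mathfrak a$. Therefore $N\mathfrak a=0$ and $N$ is a simple module over the product $\oplus_{i=1}^r\enm_k(M_i)$.

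A simple module over a finite product of rings is simple over a single factor, so $N$ is a simple $\enm_k(M_i)$-module for one $i$, the remaining factors acting as zero. It then suffices to identify the simple $\enm_k(M_i)$-modules with countable basis. By the preliminary lemma the unique proper nonzero ideal of $\enm_k(M_i)$ is the finite-rank ideal $\mathfrak m^{(i)}$; if $N\mathfrak m^{(i)}\neq 0$ the transitivity of finite-rank operators identifies $N$ with the natural module $M_i$, whereas $N\mathfrak m^{(i)}=0$ would realize $N$ as a simple module over the simple ring $\enm_k(M_i)/\mathfrak m^{(i)}$. The statement for left modules follows by the same argument, using that $M_i$ is also the natural simple left $\enm_k(M_i)$-module.

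The step I expect to be the main obstacle is this last one: ruling out the case $N\mathfrak m^{(i)}=0$, i.e. showing that among modules with countable basis there is no simple module over $\enm_k(M_i)/\mathfrak m^{(i)}$, so that $M_i$ is the only countable-basis simple $\enm_k(M_i)$-module. This is precisely where the standing countability hypothesis is indispensable, and where one must argue from the fine ideal and module structure of $\enm_k(V)$ for countably infinite-dimensional $V$ rather than from any formal property of the completion.
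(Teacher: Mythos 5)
Your first half---each $M_i$ is a simple right and left $\hat O^A_M$-module---is fine and matches the paper, which obtains the right-module statement from Lemma \ref{forwardlemma}; your variant via the surjectivity of $\pi_i$ is if anything cleaner, since it does not presuppose that $M_i$ is simple over $A$. Your reduction of the converse is also sound as far as it goes: the geometric-series argument of Lemma \ref{unitlemma} does show $\ker\pi\subseteq J(\hat O^A_M)$, equality follows because $\oplus_{i=1}^r\enm_k(M_i)$ is semiprimitive, and a simple module over a finite product of rings is concentrated on one factor. So you have correctly reduced the proposition to the claim that the only simple right $\enm_k(M_i)$-module with countable $k$-basis is $M_i$ itself.

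That reduction is where your proof stops, and the remaining step is not a formality---it is the actual content of the proposition when $\dim_k M_i$ is infinite. The quotient $\enm_k(M_i)/\mathfrak m^{(i)}$ by the finite-rank ideal is a nonzero ring, so by Zorn's lemma it has maximal right ideals and hence simple modules, which pull back to simple $\enm_k(M_i)$-modules not isomorphic to $M_i$; the proposition survives only if none of these has a countable basis, and you give no argument for that (nor is it obvious: such a module is cyclic, so crude cardinality bounds do not immediately exclude it). Flagging the gap does not close it. For comparison, the paper's own proof takes a terser and genuinely different route to the converse: it counts the maximal two-sided ideals of $\hat O^A_M$ (there are $r$, via the augmentation onto $\oplus_{i=1}^r\enm_k(M_i)$) and then appeals to Morita equivalence, i.e.\ to $\enm_k(M_i)$ being Morita equivalent to $k$. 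That is airtight when each $M_i$ is finite-dimensional, since then $\enm_k(M_i)\simeq\matr_{n_i}(k)$; in the countably infinite-dimensional case it conceals exactly the difficulty you isolated, since $\enm_k(V)$ is then not Morita equivalent to $k$ (it has the nontrivial ideal of finite-rank endomorphisms). Your diagnosis of where the countability hypothesis must enter is correct, but to have a proof you must actually establish that $\enm_k(V)/\mathfrak m$ has no simple module with countable basis when $V$ has countable basis.
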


\begin{proof} It follows from Lemma \ref{forwardlemma} that $M$ is contained in the set of simple right modules. We also have the diagram
$$\xymatrix{k^r\ar[r]^\rho\ar[dr]_\iota&\hat A(M)\ar@{>>}[d]^\pi\\&\oplus_{i=1}^r\enm_k(M_i)/\mathfrak m_i}$$ where $\mathfrak m_i\subset\enm_k(M_i)$ is the unique maximal ideal. It then follows that $\hat O^A_M$ has exactly $r$ two-sided maximal ideals, and then the result follows by Morita-equivalence.
\end{proof}

\begin{proposition} Let $A$ be a commutative $k$-algebra, $k$ algebraically closed and $A$ finitely generated. Let $M=\oplus_{i=1}^r A/\mathfrak m_i$ where $\mathfrak m_i,\ 1\leq i\leq r,$ are maximal ideals. Then $$\hat O^A_M\simeq\oplus_{i=1}^r\hat A_{\mathfrak m_i}.$$ 
\end{proposition}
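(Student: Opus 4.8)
The plan is to first reduce the computation of $\hat{O}^A_M$ to the identification of the pre-completion $\hat{H}^A_M$. Since $k$ is algebraically closed and each $\mathfrak{m}_i$ is maximal, the Nullstellensatz gives $M_i=A/\mathfrak{m}_i\simeq k$, so $M\simeq k^r$ as a $k^r$-module. Exactly as in the proof of the proposition on the augmented case, this yields $\hat{H}^A_M\otimes_{k^r}M\simeq \hat{H}^A_M$ and hence
$$\hat{O}^A_M=\enm_{\hat{H}^A_M}(\hat{H}^A_M\otimes_{k^r}M)\simeq\enm_{\hat{H}^A_M}(\hat{H}^A_M)\simeq\hat{H}^A_M.$$
Thus it suffices to prove that the pre-completion of $A$ in $M$ is $\oplus_{i=1}^r\hat{A}_{\mathfrak{m}_i}$, where the direct sum carries the evident $k^r$-algebra structure given by the orthogonal idempotents $e_i$.

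Next I would compute the relevant $\ext$-groups to show that the deformation problem decouples over the $r$ points. Because $A$ is commutative Noetherian and $M_i,M_j$ have disjoint support for $i\neq j$, localization at each prime gives $\ext^n_A(M_i,M_j)=0$ for all $n\geq 0$ whenever $i\neq j$; for $i=j$ the group $\ext^1_A(M_i,M_i)\simeq(\mathfrak{m}_i/\mathfrak{m}_i^2)^\ast$ is finite-dimensional. By Theorem \ref{deftheorem} the prorepresenting hull exists. The vanishing of the off-diagonal $\ext^1_A(M_i,M_j)$ already forces the off-diagonal entries $V_{ij}$ of the associated tangent algebra to be zero; combined with the vanishing of \emph{all} higher cross-$\ext$ groups, the obstruction (Massey-product) calculus between distinct points is trivial, so the deformation functor $\defm{M}$ of the family splits as a product $\prod_{i=1}^r\defm{M_i}$ over the category of $r$-pointed Artinian test algebras. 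Consequently $\hat{H}^A_M$ is the diagonal $k^r$-algebra $\oplus_{i=1}^r\hat{H}_i$, where $\hat{H}_i$ is the hull of $\defm{M_i}$.

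It then remains to identify $\hat{H}_i\simeq\hat{A}_{\mathfrak{m}_i}$. A deformation of the one-dimensional module $M_i$ to a pointed Artinian $k$-algebra $R$ is $R$-flat and reduces to $k$, hence is free of rank one over $R$; the right $A$-action is therefore an algebra homomorphism $A\to\enm_R(R)\simeq R^{\op}$ lifting the augmentation $A\to A/\mathfrak{m}_i=k$. Such a lift carries $\mathfrak{m}_i$ into the nilpotent radical of $R$, so it factors through $A/\mathfrak{m}_i^N$ for some $N$ and thus through $\hat{A}_{\mathfrak{m}_i}=\varprojlim A/\mathfrak{m}_i^n$; since $A$ is commutative the image is commutative and $R^{\op}$ may be replaced by $R$. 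This produces a natural isomorphism $\defm{M_i}(R)\simeq\hmm(\hat{A}_{\mathfrak{m}_i},R)$, showing $\defm{M_i}$ is prorepresented by $\hat{A}_{\mathfrak{m}_i}$, so $\hat{H}_i\simeq\hat{A}_{\mathfrak{m}_i}$.

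Assembling these, $\hat{H}^A_M\simeq\oplus_{i=1}^r\hat{A}_{\mathfrak{m}_i}$, and by the first paragraph $\hat{O}^A_M\simeq\oplus_{i=1}^r\hat{A}_{\mathfrak{m}_i}$ (a finite product of rings, hence also the ring direct sum). I expect the main obstacle to be the rigorous justification of the product decomposition of the hull in the second paragraph: one must argue that vanishing of the cross-$\ext$ groups in \emph{every} degree, not merely degree one, makes the entire obstruction theory block-diagonal so that no higher-order interaction links distinct points. A clean way to sidestep the full obstruction calculus is to instead verify directly that $\oplus_{i=1}^r\hat{A}_{\mathfrak{m}_i}$, together with the natural map induced by $A\to\prod_i\hat{A}_{\mathfrak{m}_i}$, satisfies the versal property of Definition \ref{precompdef}, using the per-point representability established in the third paragraph to produce and glue the required morphism $\psi$.
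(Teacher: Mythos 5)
Your argument follows the same route as the paper's: first reduce $\hat O^A_M$ to $\hat H^A_M$ via $M\simeq k^r$ (so that $\enm_{\hat H^A_M}(\hat H^A_M\otimes_{k^r}M)\simeq\hat H^A_M$), then identify the hull with $\oplus_{i=1}^r\hat A_{\mathfrak m_i}$. The paper simply cites that identification as known from \cite{ELS17}, whereas you supply the Ext-vanishing, splitting, and representability details; this is a correct filling-in of the cited fact, with the minor caveat that $\hat A_{\mathfrak m_i}$ is in general only a prorepresenting hull (versal) rather than prorepresenting, since isomorphic deformations over a noncommutative test algebra correspond to conjugate, not equal, homomorphisms.
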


\begin{proof}
In this situation we know that for each $1\leq i\leq r$ that the $\mathfrak m_i$-adic completion $\hat A_{\mathfrak m_i}$ of $A$ is a prorepresenting hull for $\defm M.$ The rest follows from the fact that $A/\mathfrak m_i\simeq k,$ so that $\enm_{\hat H^A_M}(\hat H^A_M\otimes_{k^r}M)\simeq \hat H^A_M.$ 
\end{proof}

\section{The Ring of Local Functions}

In algebraic and differential geometry, the rings of local functions in  closed points are sufficient for the classification by geometric properties. If we want the possibility to parametrize finite sets of interacting points, we will have to evaluate functions in finite sets of closed points.

From Theorem \ref{deftheorem} it follows that when $A$ is a $k$-algebra and $M=\oplus_{i=1}^r M_i$ is a sum of right $A$-modules that satisfies the given conditions, the completion $\hat O^A_M$ exists. It is important to notice that the deformation theory is just a proof of existence, and that the computation of pre-completion is explicitly and algorithmically given, independent of deformation theory in \cite{S242}.

\begin{definition} Let $M=\oplus_{i=1}^r M_i$ be a sum of right $A$-modules. Then the ring $O^A_M$ of functions locally defined at $M$ is defined as the sub-algebra of $\hat O^A_M$ generated by  $\rho^A_M(A)$ (over $k^r$), together with $\rho^A_M(a)^{-1}$ whenever $\rho^A_M(a)$ is a unit.
\end{definition}

\begin{proposition} The $k$-algebra $O^A_M$ is augmented over $\oplus_{i=1}^r\enm_k(M_i),$ and $$O^{O^A_M}_M\simeq O^A_M.$$
\end{proposition}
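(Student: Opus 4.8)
The plan is to prove the two assertions separately, obtaining the augmentation by transport of structure along the inclusion $O^A_M\hookrightarrow\hat O^A_M$ and reducing the idempotency $O^{O^A_M}_M\simeq O^A_M$ to a single closure property. For the augmentation I would first note that the orthogonal idempotents $e_1,\dots,e_r$ spanning the central copy $k^r$ already lie in $O^A_M$, since it is generated over $k^r$; this supplies the structure map $k^r\to O^A_M$. Composing the inclusion $O^A_M\hookrightarrow\hat O^A_M$ with the augmentation $\pi\colon\hat O^A_M\twoheadrightarrow\oplus_{i=1}^r\enm_k(M_i)$ yields a map $O^A_M\to\oplus_{i=1}^r\enm_k(M_i)$ whose composite with $\rho^A_M$ is $\oplus_{i=1}^r\eta_{M_i}$. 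Because each $M_i$ is simple as an $A$-module and $A$ acts through $O^A_M$, Lemma \ref{forwardlemma} gives that each $M_i$ is simple as an $O^A_M$-module, so $O^A_M$ inherits the augmented structure over $\oplus_{i=1}^r\enm_k(M_i)$ with the $M_i$ as its simple modules.

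The substantive step is to show that passing to the completion cannot distinguish $A$ from $O^A_M$, that is $\hat H^{O^A_M}_M\simeq\hat H^A_M$. I would establish a natural isomorphism between the deformation functor of $M$ over $A$ and that of $M$ over $O^A_M$. Restriction along $\rho^A_M\colon A\to O^A_M$ sends an $O^A_M$-deformation to an $A$-deformation; for the inverse, given an $A$-deformation $\phi_S\colon A\to\enm_S(S\otimes_{k^r}M)$ over a complete augmented $S$, the versality in Definition \ref{precompdef} yields $\psi\colon\hat H^A_M\to S$ with $\Pi(\psi)\circ\rho^A_M=\phi_S$, and $\Pi(\psi)|_{O^A_M}$ is the sought extension. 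This extension is forced on generators, hence unique, and is well defined because, whenever $\rho^A_M(a)$ is a unit in $\hat O^A_M$, its image $\oplus_{i=1}^r\eta_{M_i}(a)$ is a unit in $\oplus_{i=1}^r\enm_k(M_i)$, so $\phi_S(a)$, reducing to that unit modulo the pro-nilpotent augmentation ideal of $\enm_S(S\otimes_{k^r}M)$, is itself invertible by completeness of $S$. With naturality being routine, the two functors agree, their prorepresenting hulls coincide, and therefore $\hat O^{O^A_M}_M\simeq\hat O^A_M$ with $\rho^{O^A_M}_M$ identified with the inclusion $O^A_M\hookrightarrow\hat O^A_M$; this also produces $\hat H^{O^A_M}_M$ directly, bypassing any finite generation hypothesis in Theorem \ref{deftheorem}.

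Under this identification $O^{O^A_M}_M$ is, by definition, the subalgebra of $\hat O^A_M$ generated by $\rho^{O^A_M}_M(O^A_M)=O^A_M$ together with $b^{-1}$ for every $b\in O^A_M$ that is a unit in $\hat O^A_M$. The inclusion $O^A_M\subseteq O^{O^A_M}_M$ is immediate, so idempotency is equivalent to the claim that $O^A_M$ is \emph{already} closed under inversion of its units, and this is the step I expect to be the main obstacle. The mechanism is transparent on fraction-like elements: using that $\rho^A_M(A)$ is a subring and that, by completeness, $\rho^A_M(a)$ is a unit exactly when its residue is (Lemma \ref{unitlemma} and its proof, adapted to the $r$-pointed case of Proposition \ref{rpointprop}), one clears denominators, for instance $\rho^A_M(a)+\rho^A_M(s)^{-1}=\rho^A_M(as+1)\rho^A_M(s)^{-1}$, whose invertibility forces $\rho^A_M(as+1)$ to be a unit with $as+1\in A$, so $\rho^A_M(as+1)^{-1}\in O^A_M$ and hence $b^{-1}\in O^A_M$.

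The genuine difficulty is the general element, a sum of alternating words $\rho^A_M(a_0)\rho^A_M(s_1)^{-1}\cdots\rho^A_M(s_n)^{-1}\rho^A_M(a_n)$, which in the absence of the Ore condition cannot be combined into a single fraction. I would attack this by induction on the number of inverted factors, clearing one denominator at a time and arranging that each intermediate numerator produced lies in $\rho^A_M(A)$ and is forced to be a unit, so that its inverse is a generator of $O^A_M$. Making this clearing procedure go through unconditionally, precisely at the point where the Ore condition would otherwise be invoked, is the heart of the argument and the part I would expect to require the most care.
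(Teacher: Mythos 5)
Your first two paragraphs reconstruct, in more detail, essentially everything the paper's own proof contains: the paper simply invokes ``the universal property of projective limits and the versal property of prorepresenting hulls'' to obtain $\hat O^{O^A_M}_M\simeq\hat O^A_M$ and then declares that the result follows. Your identification of the deformation functors of $M$ over $A$ and over $O^A_M$ --- with the extension of a deformation along $\rho^A_M$ forced on generators and well defined because units lift through the pro-nilpotent augmentation ideal --- is a reasonable way to make that step precise, and the augmentation statement is handled correctly by restriction from $\hat O^A_M$.

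The gap is exactly where you place it, and you have not closed it. Once $\hat O^{O^A_M}_M$ is identified with $\hat O^A_M$ and $\rho^{O^A_M}_M$ with the inclusion, $O^{O^A_M}_M$ is by definition generated by $O^A_M$ together with $b^{-1}$ for every $b\in O^A_M$ that is a unit, so the claimed isomorphism $O^{O^A_M}_M\simeq O^A_M$ is equivalent to $O^A_M$ being closed under inversion of its units. Your denominator clearing handles elements of the form $\rho^A_M(a)$, $\rho^A_M(a)\rho^A_M(s)^{-1}$ or $\rho^A_M(a)+\rho^A_M(s)^{-1}$, but a general element of $O^A_M$ is a sum of alternating words, and the proposed induction on the number of inverted factors is precisely the manipulation that the absence of an Ore condition obstructs: there is in general no way to move an inner factor $\rho^A_M(s_i)^{-1}$ past its neighbours so as to produce an intermediate numerator lying in $\rho^A_M(A)$, and a factor of a unit in $\enm_{\hat H^A_M}(\hat H^A_M\otimes_{k^r}M)$ need only be one-sided invertible when the $M_i$ are infinite dimensional. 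More structurally, the division closure of a subring is in general obtained only by iterating the adjunction of inverses, so a single round of adjoining the elements $\rho^A_M(a)^{-1}$ need not yield a division-closed subalgebra. To make the proposition correct as stated you either need an actual proof of this closure property --- which the paper does not supply either, since its proof stops at $\hat O^{O^A_M}_M\simeq\hat O^A_M$ --- or you need to redefine $O^A_M$ as the smallest subalgebra of $\hat O^A_M$ containing $\rho^A_M(A)$ and closed under inversion of units, for which the idempotency becomes immediate.
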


\begin{proof} This follows from the universal property of projective limits, and the versal property of prorepresenting hulls (or pre-completion). Then we have $\hat O^{O^A_M}_M\simeq\hat O^A_M,$ from which the result follows.
\end{proof}

Consider a ring homomorphism $\phi:A\rightarrow B$ between two associative rings. Then every $B$-module $M$ is also an $A$-module via $\phi.$ More precisely, if $M$ is defined as $B$-module by the structure morphism $\eta^B_M,$ then $M$ is defined as an $A$-module by the structure morphism $\eta^A_M=\eta^B_M\circ\phi.$ We follow the usual notation, and call the $B$ module $M$ considered as $A$-module via $\phi$, $M^c.$

\begin{lemma} Let $B$ be a $k$-algebra and $M=\oplus_{i=1}^rM_i$ a sum of right $B$-modules.
Let $\phi:A\rightarrow B$ be an algebra homomorphism so that $M^c=\oplus_{i=1}^r M^c_i.$ Then $\phi$ induces a morphism of augmented algebras $\phi^\ast:O^A_{M^c}\rightarrow O^B_M$.
\end{lemma}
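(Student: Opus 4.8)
The plan is to construct the map first at the level of the completions, by recognizing $\hat O^B_M$ (equipped with the composite $\rho^B_M\circ\phi$) as a legitimate target for the versal property of the pre-completion $\hat H^A_{M^c}$, and then to verify that the resulting homomorphism carries the distinguished subalgebra $O^A_{M^c}$ into $O^B_M$. The crucial first observation is that $M^c$ and $M$ share the same underlying $k^r$-module, since $\phi$ alters only the $A$-action and not the vector-space structure. Consequently $\enm_k(M^c_i)=\enm_k(M_i)$ for every $i$, the tensor products $\hat H^B_M\otimes_{k^r}M^c$ and $\hat H^B_M\otimes_{k^r}M$ coincide, and therefore $\enm_{\hat H^B_M}(\hat H^B_M\otimes_{k^r}M^c)=\hat O^B_M$. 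In particular, $\hat H^B_M$ is a complete $k^r$-augmented algebra carrying the homomorphism $\hat\rho^B_M\circ\phi:A\to\enm_{\hat H^B_M}(\hat H^B_M\otimes_{k^r}M^c)$.

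Next I would check that this data fits the augmentation diagram for $A$ in $M^c$. Applying $\Pi(\pi_{\hat H^B_M})$ to $\hat\rho^B_M\circ\phi$ yields $(\oplus_{i=1}^r\eta^B_{M_i})\circ\phi$, and since $\eta^A_{M^c_i}=\eta^B_{M_i}\circ\phi$ by the definition of the restricted module structure, this equals $\oplus_{i=1}^r\eta^A_{M^c_i}$, which is exactly the required bottom arrow (the augmentation targets agree because $\enm_k(M^c_i)=\enm_k(M_i)$). With this candidate verified, the versal property of $\hat H^A_{M^c}$ in Definition \ref{precompdef} supplies a morphism of augmented algebras $\psi:\hat H^A_{M^c}\to\hat H^B_M$ commuting in the diagram. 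Applying $\Pi$ produces a $k^r$-algebra homomorphism $\hat\phi:=\Pi(\psi):\hat O^A_{M^c}\to\hat O^B_M$; the commutativity of the versal diagram gives the key identity $\hat\phi\circ\rho^A_{M^c}=\rho^B_M\circ\phi$, and the augmentation-compatibility of $\psi$ gives $\Pi(\pi_{\hat H^B_M})\circ\hat\phi=\Pi(\pi_{\hat H^A_{M^c}})$, so that $\hat\phi$ respects the augmentations over $\oplus_{i=1}^r\enm_k(M_i)$.

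It then remains to show that $\hat\phi$ restricts to the desired map on local functions. Recall that $O^A_{M^c}$ is generated over $k^r$ by the elements $\rho^A_{M^c}(a)$ together with the inverses $\rho^A_{M^c}(a)^{-1}$ of those that happen to be units. On a generator $\rho^A_{M^c}(a)$ the identity above gives $\hat\phi(\rho^A_{M^c}(a))=\rho^B_M(\phi(a))\in\rho^B_M(B)\subseteq O^B_M$, and $\hat\phi$ is the identity on $k^r$. Since $\hat\phi$ is a ring homomorphism it sends a unit $\rho^A_{M^c}(a)$ to the unit $\rho^B_M(\phi(a))$, whence $\hat\phi(\rho^A_{M^c}(a)^{-1})=\rho^B_M(\phi(a))^{-1}$; as $\rho^B_M(\phi(a))$ is a unit of the form $\rho^B_M(b)$ with $b=\phi(a)\in B$, its inverse lies in $O^B_M$ by the very definition of the ring of local functions. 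Thus every generator of $O^A_{M^c}$ is carried into $O^B_M$, so $\hat\phi(O^A_{M^c})\subseteq O^B_M$, and $\phi^\ast:=\hat\phi|_{O^A_{M^c}}$ is a morphism of augmented algebras.

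The step requiring the most care is the inverse-closure argument: it works precisely because $\hat\phi$ carries each distinguished generator $\rho^A_{M^c}(a)$ to an element $\rho^B_M(\phi(a))$ of the same distinguished form, so its inverse is automatically among the adjoined inverses defining $O^B_M$; without the commutativity $\hat\phi\circ\rho^A_{M^c}=\rho^B_M\circ\phi$ extracted from the versal diagram, there would be no reason for an inverted unit to land back in the subalgebra rather than merely in the completion. I would also flag one caveat: because Definition \ref{precompdef} only guarantees \emph{uniqueness} of $\psi$ when the target is the tangent algebra $T$, the morphism $\psi$, and hence $\phi^\ast$, need not be canonical, so strict functoriality $(\phi\circ\phi')^\ast=\phi'^\ast\circ\phi^\ast$ would require a separate argument; for the existence statement of the lemma this is immaterial.
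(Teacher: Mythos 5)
Your proposal is correct and follows essentially the same route as the paper: invoke the versal property of the pre-completion to obtain $\psi:\hat H^A_{M^c}\rightarrow\hat H^B_M$, apply $\Pi$ to get a map of completions satisfying $\Pi(\psi)\circ\hat\rho^A_{M^c}=\hat\rho^B_M\circ\phi$, and then observe that units of the form $\rho^A_{M^c}(a)$ are sent to units of the form $\rho^B_M(\phi(a))$, so their inverses land in $O^B_M$ by definition. Your write-up is in fact more careful than the paper's (which adds an extraneous surjectivity claim for $\psi$ and leaves the verification of the versal diagram implicit), and your caveat about the non-uniqueness of $\psi$ is a fair observation that the paper does not address.
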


\begin{proof} This follows directly from the versal property of pre-completion (or prorepresentable hull). The morphism $\phi:A\rightarrow B$ induces a homomorphism $A\rightarrow\hat O^A_{M^c}$ in the diagram
$$\xymatrix{A\ar[drr]_{\oplus_{i=1}^r\eta^A_{M^c}}\ar[r]^\phi&B\ar[dr]^{\oplus_{i=1}^r\eta^B_M}\ar[r]^-{\hat\rho^B_M}&\hat O^B_M=\enm_{\hat H^B_M}(\hat H^B_M\otimes_{k^r}M)\ar[d]^{\Pi(\pi_{\hat H})}\\
&&\oplus_{i=1}^r\enm_k(M_i)}$$
and $O^B_M\subseteq\hat O^B_M$ is the subalgebra generated by $\rho^B_M(B)$ together with the set $\{\rho(b)^{-1}|\rho^B_M(b)\text{ is a unit}\}.$ From the definition of precompletion, there is a homomorphism $\psi:\hat H^A_{M_c}\rightarrow\hat B_M$ which is surjective because it is so when restricted to $\psi_2:\hat H^A_{M_c}/(\ker\pi_{\hat H^A_{M_c}})^2\rightarrow\hat H^B_{M}/(\ker\pi_{\hat H^B_{M}})^2.$ Also the homomorphism $\psi$ induces the diagram

$$\xymatrix{A\ar[d]_{\hat\rho^B_M\circ\phi}\ar[r]^-{\hat\rho^A_M}\ar[dr]_(0,3){\oplus\eta_{M_i}}&\enm_{\hat H^A_M}(\hat H^A_M\otimes_{k^r}M)\ar[d]^{\Pi(\pi_{\hat H^A_M})}\ar@{-->}[dl]_{\Pi(\psi)}\\\enm_{\hat H^B_M}(\hat H^B_M\otimes_{k^r}M)\ar[r]_{\Pi(\pi_{\hat H^B_M})}&\oplus_{i=1}^r\enm_k(M_i).}$$

 Then, if $\hat\rho^A_M(a)$ is invertible in $\hat O^A_M,$ $\hat\rho^B_M(\phi(a))=\Pi(\psi)(a)$ is invertible in $\hat O^B_M$ and this gives a morphism $\phi^\ast:O^A_{M_c}\rightarrow O^B_M.$
 
\end{proof}

\end{document}